\newtheorem{theorem}{Theorem}[section]
\newtheorem{definition}[theorem]{Definition}
\newcommand{\ie}{\mbox{i.\hspace{.5pt}e.}\ }
\newcommand{\f}{\varphi}
\newcommand{\g}{\tilde{g}}
\newcommand{\n}{\nabla}
\newcommand{\tn}{\tilde{\n}}
\newcommand{\ttt}{\tilde\tau}
\newcommand{\MM}{\mathcal{M}}
\newcommand{\M}{(\MM,\A\f,\A\xi,\A\eta,\A{}g)}
\newcommand{\I}{\iota}
\newcommand{\R}{\mathbb R}
\newcommand{\F}{\mathcal{F}}
\newcommand{\LL}{\mathcal{L}}
\newcommand{\lm}{\lambda}
\newcommand{\bt}{\beta}
\newcommand{\vt}{\vartheta}
\newcommand{\tlm}{\tilde{\lm}}
\newcommand{\tps}{\tilde{\psi}}
\newcommand{\D}{\mathrm{d}} 
\DeclareMathOperator{\Div}{div} 
\DeclareMathOperator{\tr}{tr} 
\DeclareMathOperator{\Span}{span} 
\newcommand{\A}{\allowbreak{}}
\newcommand{\thmref}[1]{Theorem~\ref{#1}}
\begin{document}
\title[Ricci--Bourguignon Almost Solitons on Sasaki-like Manifolds]
{Ricci--Bourguignon Almost Solitons with Special Potential on Sasaki-like Almost Contact Complex Riemannian Manifolds}

\author[M. Manev]{Mancho Manev $^{1,2}$}

\address{%
$^{1}$ 
Department of Algebra and Geometry;
Faculty of Mathematics and Informatics;
University of Plovdiv Paisii Hilendarski;
24 Tzar Asen St;
4000 Plovdiv, Bulgaria}
\email{mmanev@uni-plovdiv.bg}

\address{$^{2}$ 
Department of Medical Physics and Biophysics;
Faculty of Pharmacy;
Medical University of Plovdiv;
15A Vasil Aprilov Blvd;
4002 Plovdiv, Bulgaria}
\email{mancho.manev@mu-plovdiv.bg}


\begin{abstract}{Almost contact complex Riemannian manifolds, known also as almost contact B-metric manifolds,
are equipped with a pair of pseudo-Riemannian metrics that are mutually associated to each other using the tensor structure.
Here we consider a special class of these manifolds, those of the Sasaki-like type. They have an interesting geometric interpretation: the complex cone of such a manifold is a holomorphic complex Riemannian
manifold (also called a K\"ahler--Norden manifold).
The basic metric on the considered manifold is specialized here as a soliton, \ie has an additional curvature property such that the metric is a self-similar solution of an intrinsic geometric flow.
Almost solitons are more general objects than solitons because they use functions rather than constants as coefficients in the defining condition.
A $\beta$-Ricci-Bour\-gui\-gnon-like almost soliton ($\beta$ is a real constant) is defined using the pair of metrics.
The introduced soliton is a generalization of some well-known (almost) solitons (such as those of Ricci, Schouten, Einstein), which in principle arise from a single metric rather than a pair of metrics.
The soliton potential is chosen to be pointwise collinear to the Reeb vector field,
or the Lie derivative of any B-metric along the potential to be the same metric multiplied by a function.
The resulting manifolds equipped with the introduced almost solitons are characterized geometrically.
Appropriate examples for two types of almost solitons  are constructed and the properties obtained in the theoretical part are confirmed.
}
\end{abstract}

\keywords{Ricci–Bour\-gui\-gnon;
almost contact B-metric manifold; almost contact complex Riemannian manifold; Sasaki-like manifold; vertical potential; conformal potential
}

\subjclass[2010]{
53C25; 
53D15;  	
53C50; 
53C44;  	
53D35; 
70G45} 

\maketitle


\section{Introduction}

The notion of \emph{Ricci-Bour\-gui\-gnon flow} was introduced by J.\,P. Bour\-gui\-gnon in 1981 \cite{JPB81}.
A time-dependent family of (pseudo-)Riemannian metrics $g(t)$ considered on a smooth manifold $\MM$ is said to evolve through Ricci-Bour\-gui\-gnon flow if $g(t)$ satisfies the following evolution equation
\[
    \frac{\partial}{\partial t} g = -2\left(\rho - \bt \tau g\right),\qquad g(0) = g_0,
\]
where $\bt$ is a real constant, $\rho(t)$ and $\tau(t)$ are the Ricci tensor and the scalar curvature  regarding $g(t)$, respectively.

This flow is an intrinsic geometric flow on $\MM$, whose fixed points or self-similar solutions are its solitons.
The \emph{Ricci-Bour\-gui\-gnon soliton} (in short RB soliton) is described by
the following equation \cite{Cat17,Dwi21}
\begin{equation}\label{RB-g}
  \rho+\frac12 \LL_{\vt} g +(\lm + \bt\tau) g =0,
\end{equation}
where $\LL_{\vt} g$ denotes the Lie derivative of $g$ along the vector field $\vt$ called the soliton potential,
and $\lm$ is the soliton constant.
Briefly, we denote this soliton by $(g, \bt;\vt,\lm)$.
In the case that $\lm$ is a differential function on $\MM$, the solution is called a \emph{RB almost  soliton} \cite{Dwi21}.
A RB soliton is called
expanding if $\lm > 0$, steady if $\lm = 0$ and shrinking if $\lm < 0$.
In case that the soliton potential $\vt$ is a Killing vector field, i.e. $\LL_{\vt} g=0$, the RB soliton is called trivial.

This family of geometric flows contains, the famous Ricci flow for $\bt = 0$,
the Einstein flow for $\bt = \frac12$, the traceless Ricci flow for $\bt = \frac{1}{m}$ and the Schouten flow
for $\bt = \frac{1}{2(m-1)}$, where $m$ is the dimension of the manifold \cite{CatMazMon15,SidSid21}.

For this reason, we consider it more correct to say \emph{$\bt$-RB solitons} and \emph{$\bt$-RB almost solitons}, respectively.

Other recent studies on Ricci-Bour\-gui\-gnon solitons have been done in \cite{BlaTas21,Cun23,Dog23,Mi21,Soy22}.


\section{accR manifolds}


Let us consider a $(2n+1)$-dimensional smooth manifold $\MM$, equipped with an almost contact structure $(\f,\xi,\eta)$
and a B-metric $g$. It is called an \emph{almost contact B-metric man\-i\-fold} or \emph{almost contact complex Riemannian} (abbr.\ \emph{accR}) \emph{manifold} and it is denoted by $\M$.
In detail, $\f$ is an endomorphism
of the tangent bundle $T\MM$, $\xi$ is a Reeb vector field, $\eta$ is its dual contact 1-form and
$g$ is a pseu\-do-Rie\-mannian
metric  of signature $(n+1,n)$ such that
\begin{equation}\label{strM}
\begin{array}{c}
\f\xi = 0,\qquad \f^2 = -\I + \eta \otimes \xi,\qquad
\eta\circ\f=0,\qquad \eta(\xi)=1,\\[4pt]
g(\f x, \f y) = - g(x,y) + \eta(x)\eta(y),
\end{array}
\end{equation}
where $\I$ denotes the identity on $\Gamma(T\MM)$ \cite{GaMiGr}.

In the latter equality and further, $x$, $y$, $z$ 
will stand for arbitrary elements of $\Gamma(T\MM)$ or vectors in the tangent space $T_p\MM$ of $\MM$ at an arbitrary
point $p$ in $\MM$.

The following equations are immediate consequences of \eqref{strM}
\begin{equation}\label{conseq}
g(\f x, y) = g(x,\f y),\qquad g(x, \xi) = \eta(x),\qquad
g(\xi, \xi) = 1,\qquad \eta(\n_x \xi) = 0,
\end{equation}
where $\n$ denotes the Levi-Civita connection of $g$.

The associated metric $\g$ of $g$ on $\MM$ is also a B-metric and it is defined by
\begin{equation}\label{gg}
\g(x,y)=g(x,\f y)+\eta(x)\eta(y).
\end{equation}

The Ganchev--Mihova--Gribachev classification of the investigated manifolds, given in  \cite{GaMiGr},
consists of eleven basic classes $\F_i$, $i\in\{1,2,\dots,11\}$, determined by conditions for
the (0,3)-tensor $F$ defined by
\begin{equation}\label{F=nfi}
F(x,y,z)=g\bigl( \left( \nabla_x \f \right)y,z\bigr).
\end{equation}
It has the following basic properties:
\begin{eqnarray}\label{F-prop1}
&F(x,y,z)=F(x,z,y)
=F(x,\f y,\f z)+\eta(y)F(x,\xi,z)
+\eta(z)F(x,y,\xi),\\[4pt] \label{F-prop2}
&F(x,\f y, \xi)=(\n_x\eta)y=g(\n_x\xi,y).
\end{eqnarray}

%


\subsection{Sasaki-like accR manifolds}

An interesting class of accR manifolds was introduced in \cite{IvMaMa45} by the condition that the complex cone of such a manifold is a K\"ahler-Norden manifold. They are called \emph{Sasaki-like manifolds} and are defined by the condition
\begin{equation}\label{Sl}
F(x,y,z)=g(\f x,\f y)\eta(z)+g(\f x,\f z)\eta(y).
\end{equation}
The class of Sasaki-like manifolds is contained in the basic class $\F_4$ of the Ganchev--Mihova--Gribachev classification, not intersecting with the special class $\F_0$ of cosymplectic accR manifolds defined by $F=0$.

Moreover,
the following identities are valid for this type of accR manifolds \cite{IvMaMa45}
\begin{equation}\label{curSl}
\n_x \xi=-\f x, \qquad 
\rho(x,\xi)=2n\, \eta(x),\qquad 		
\rho(\xi,\xi)=2n,
\end{equation}
where 
$\rho$ stands for 
the Ricci tensor for $g$.

Let $\tau$ and $\ttt$ be the scalar curvatures with respect to $g$ and $\g$, respectively,
and let $\tau^*$ be the associated quantity of $\tau$ regarding $\f$, defined by
$\tau^* = g^{ij} \rho(e_i ,\f e_j )$.
Then, for a Sasaki-like manifold we have
\begin{equation}\label{tttSl}
 \ttt = -\tau^* + 2n.
\end{equation}

\subsection{Einstein-like accR manifolds}

In \cite{Man62}, it is introduce the following notion.
An accR manifold $\M$ is said to be
\emph{Einstein-like} if its Ricci tensor $\rho$ satisfies
\begin{equation}\label{defEl}
\begin{array}{l}
\rho=a\,g +b\,\g +c\,\eta\otimes\eta
\end{array}
\end{equation}
for some triplet of constants $(a,b,c)$.
%
In particular, when $b=0$ and $b=c=0$, the manifold is called an \emph{$\eta$-Einstein manifold} and an \emph{Einstein manifold}, respectively.
If $a$, $b$, $c$ in \eqref{defEl} are functions on $\MM$, then the manifold is called \emph{almost Einstein-like}, \emph{almost $\eta$-Einstein} and \emph{almost Einstein}, respectively \cite{Man64}.

Consequences of \eqref{defEl} are the following
$
\tau=(2n+1)a +b +c
$ 
 and $\tau^*=-2nb$.


\section{$\bt$-RB almost solitons}

A generalization of the known RB soliton on a manifold with an additional 1-form $\eta$ is an \emph{$\eta$-Ricci-Bour\-gui\-gnon
soliton} defined following \eqref{RB-g} by
\begin{equation}\label{etaRB}
  \rho+\frac12 \LL_{\vt} g +(\lm + \bt\tau) g + \mu \eta\otimes\eta=0,
\end{equation}
where $\mu$ is also a constant \cite{BlaTas21}. Obviously, an $\eta$-Ricci-Bour\-gui\-gnon soliton with $\mu = 0$ is a RB soliton.
Again, in the case where $\lm$ and $\mu$ are functions on the manifold, almost solitons of the corresponding kind are said to be given.

In the present paper, we study an accR manifold.
Having two B-metrics $g$ and $\g$ related to each other with respect to the structure of such a manifold gives us reason to introduce a more natural generalization of the $\bt$-RB soliton than \eqref{etaRB}.
In addition, we also have the structure 1-form $\eta$ so that $\eta\otimes\eta$ is included in both B-metrics $g$ and $\g$ as their restriction on the vertical distribution $\Span(\xi)$.

\begin{definition}
An accR manifold $\M$ is called a
\emph{$\bt$-Ricci-Bour\-gui\-gnon-like soliton} (in short \emph{$\bt$-RB-like soliton}) with potential vector field $\vt$ if its Ricci tensor $\rho$ satisfies the following condition for a pair of constants $(\lm,\tlm)$
\begin{equation}\label{defRBl}
\rho + \frac12 \mathcal{L}_{\vt} g + \frac12 \mathcal{L}_{\vt} \g  + (\lm+\bt\tau) g  + (\tlm+\bt\ttt) \g  =0,
\end{equation}
where $\ttt$ is the scalar curvature of the manifold with respect to $\g$ and the corresponding Levi-Civita connection $\tn$.
If $(\lm,\tlm)$ is a pair of functions on $\MM$ satisfying \eqref{defRBl}, then $\M$ is called \emph{$\bt$-Ricci-Bour\-gui\-gnon-like almost soliton} (in short \emph{$\bt$-RB-like almost soliton}).
\end{definition}

By taking the trace in \eqref{defRBl} with respect to $g$, we get
\[
[1+ (2n+1)\bt]\tau+\bt\ttt +\Div_g\vt +\frac12\tr_g\left(\mathcal{L}_{\vt}\g\right) + (2n+1)\lm + \tlm=0
\]
by means of the formula $\Div_g\vt=\frac12 \tr_g\left(\mathcal{L}_{\vt}g\right)= g^{ij}g\left(\n_{e_i}\vt,e_j \right)$.
The expression of the trace in the above equality is the following
\[
\frac12 \tr_g\left(\mathcal{L}_{\vt}\g\right)
= g^{ij}\g\left(\tn_{e_i}\vt,e_j \right).
\]

\subsection{The potential is conformal vector field}

Recall that a vector field,  e.g. the potential $\vt$, on $\MM$ is called a \emph{conformal vector field with respect to $g$} if there exists
a function $\psi$ on $\MM$ such that \cite{Dwi21}
\[
\LL_{\vt} g = 2\psi g.
\]
The conformal vector field is
nontrivial if $\psi \neq 0$. If $\psi = 0$, then $\vt$ is called a \emph{Killing vector field} with respect to $g$.

Similarly, $\vt$ is called a \emph{conformal vector field with respect to $\g$} if there exists
a function $\tilde\psi$ on $\MM$ such that
\[
\LL_{\vt} \g = 2\tilde\psi \g.
\]
Depending on whether $\tps$ is nonzero or zero, we have a vector field $\vt$ that is nontrivial scalar or Killing, respectively.

\begin{theorem}\label{thm:conf}
Let $\M$ be a $(2n+1)$-dimensional Sasaki-like accR manifold that is a $\bt$-RB-like almost soliton with a pair of soliton functions $(\lm,\tlm)$ and conformal potential vector field $\vt$ with potential functions $\psi$ and $\tps$ respect to $g$ and $\g$, respectively.
Then the manifold  is Einstein-like and has the following Ricci tensor:
\begin{equation}\label{concl-go-Sl}
\rho=\left(\frac{\tau}{2n}-1\right)g+\left(\frac{\ttt}{2n}-1\right)\g
\end{equation}
and the following property is valid
\begin{equation}\label{concl-tttt-Sl}
\tau+\ttt=4n(n+1).
\end{equation}

If $\bt\neq-\frac{1}{2n}$, then the scalar curvatures with respect to $g$ and $\g$ can be expressed separately as
\begin{equation}\label{concl-tau-Sl}
\tau=-\frac{2n}{1+2n\bt}(\psi+\lm-1),\qquad
\ttt=-\frac{2n}{1 + 2n\bt}(\tps + \tlm-1)
\end{equation}
and the following condition for the used functions is valid
\begin{equation}\label{rel-Sl}
\psi+\lm + \tps+\tlm +2n[1+2(n+1)\bt]=0.
\end{equation}

If $\bt=-\frac{1}{2n}$, then the following properties hold
\begin{equation}\label{func-case}
\psi + \lm=1,\qquad  \tps + \tlm=1.
\end{equation}
%
\end{theorem}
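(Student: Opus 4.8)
The plan is to collapse the soliton identity \eqref{defRBl} into an Einstein-like algebraic relation, to read off its coefficients from the Sasaki-like curvature identities \eqref{curSl}--\eqref{tttSl}, and then to translate the result back into the four claimed formulas.

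First I would substitute the conformality hypotheses $\LL_{\vt}g=2\psi g$ and $\LL_{\vt}\g=2\tps\,\g$ into \eqref{defRBl}. The Lie-derivative terms merge into the metric terms and one gets at once
\[
\rho=-(\psi+\lm+\bt\tau)\,g-(\tps+\tlm+\bt\ttt)\,\g ,
\]
so the manifold has Ricci tensor of the form $\rho=a\,g+b\,\g$ with $a=-(\psi+\lm+\bt\tau)$, $b=-(\tps+\tlm+\bt\ttt)$ and with no $\eta\otimes\eta$-term; in particular it is (at least) almost Einstein-like.

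Next I would pin down $a$ and $b$. Tracing $\rho=a\,g+b\,\g$ with respect to $g$ (using $\tr_g g=2n+1$ and $\tr_g\g=\tr\f+1=1$) gives $\tau=(2n+1)a+b$; contracting with $\f$ (using $\tr\f=0$ and $g^{ij}\g(e_i,\f e_j)=-2n$) gives $\tau^*=-2n\,b$; and evaluating at $(\xi,\xi)$, with $g(\xi,\xi)=\g(\xi,\xi)=1$ and $\rho(\xi,\xi)=2n$ from \eqref{curSl}, gives $a+b=2n$. Then \eqref{tttSl} turns $\tau^*=-2n\,b$ into $\ttt=2n(b+1)$, so $b=\tfrac{\ttt}{2n}-1$ and $a=2n-b=\tfrac{\tau}{2n}-1$; this is \eqref{concl-go-Sl}, and adding these two equalities yields $\tau+\ttt=4n(n+1)$, which is \eqref{concl-tttt-Sl}. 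To obtain the sharper assertion that $\M$ is \emph{Einstein-like} (and not merely almost Einstein-like) one must still check that $a$ and $b$, equivalently $\tau$ and $\ttt$, are constant. I would extract this from the contracted second Bianchi identity $\Div_g\rho=\tfrac12\,\D\tau$ applied to $\rho=a\,g+b\,\g$, using $\n g=0$ and the Sasaki-like covariant derivative
\[
(\n_z\g)(x,y)=F(z,x,y)+(\n_z\eta)(x)\,\eta(y)+\eta(x)\,(\n_z\eta)(y)
\]
with $F$ as in \eqref{Sl} and $\n_x\xi=-\f x$, if necessary together with the integrability identity for the conformal field $\vt$ (which expresses $\LL_{\vt}\rho$ through $\Hess\psi$ and $\Delta\psi$). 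This constancy is the step I expect to be the main obstacle.

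Finally I would translate back. Equating the two expressions for $a$, namely $-(\psi+\lm+\bt\tau)=\tfrac{\tau}{2n}-1$, rearranges to $\tfrac{1+2n\bt}{2n}\,\tau=1-\psi-\lm$, and likewise $\tfrac{1+2n\bt}{2n}\,\ttt=1-\tps-\tlm$. If $\bt\neq-\tfrac1{2n}$, dividing gives \eqref{concl-tau-Sl}; substituting these into \eqref{concl-tttt-Sl} and simplifying gives \eqref{rel-Sl}. If $\bt=-\tfrac1{2n}$, both left-hand sides vanish, forcing $\psi+\lm=1$ and $\tps+\tlm=1$, which is \eqref{func-case}.
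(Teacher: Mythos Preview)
Your argument is essentially the paper's own proof, organised a bit more efficiently. Both proofs substitute $\LL_{\vt}g=2\psi g$, $\LL_{\vt}\g=2\tps\,\g$ into \eqref{defRBl} to obtain $\rho=a\,g+b\,\g$ with $a=-(\psi+\lm+\bt\tau)$, $b=-(\tps+\tlm+\bt\ttt)$, and then extract three scalar relations by tracing. The paper derives and cross-compares the intermediate equations \eqref{tttt}, \eqref{tttt*}, \eqref{tttt2-Sl}, \eqref{tau-Sl}, \eqref{tttt-Sl}, \eqref{bt-tt-Sl} (with separate case checks for $\bt=0$ and $\bt=-\tfrac1{2n+1}$), whereas you go straight to the three independent contractions $\tau=(2n+1)a+b$, $\tau^*=-2nb$, $a+b=2n$, then invoke \eqref{tttSl} once. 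The content is the same; your route simply avoids the redundant case analysis.

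One remark on the ``main obstacle'' you flag: the paper does \emph{not} verify that $\tau$ and $\ttt$ are individually constant. Its proof ends by declaring the manifold Einstein-like with $a=\tfrac{\tau}{2n}-1$, $b=\tfrac{\ttt}{2n}-1$, $c=0$, without checking constancy, so in the paper's own terminology the conclusion as written is really ``almost Einstein-like'' (the sum $\tau+\ttt$ is constant by \eqref{concl-tttt-Sl}, but nothing in the argument forces $\tau$ and $\ttt$ separately to be constant). Your proposed Bianchi/conformal-integrability step would go beyond what the paper actually proves; for the purpose of matching the paper you may drop it.
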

\begin{proof}
Under these circumstances regarding the considered manifold,
due to \eqref{defRBl} its Ricci tensor for $g$ has the following form
\begin{equation}\label{ro-psi}
 \rho=-(\psi + \lm + \bt\tau) g -(\tps + \tlm + \bt\ttt) \g.
\end{equation}
Taking the trace of the last expression, we obtain that the scalar curvatures of $\M$ with respect to $g$ and $\g$ are related as follows
\begin{equation}\label{tttt}
[1+(2n+1)\bt]\tau + \bt\ttt + (2n+1)(\psi+\lm) +\tps + \tlm=0.
\end{equation}

Using \eqref{ro-psi},
we take the appropriate trace to obtain $\tau^*$ on the left-hand side and the resulting relation to $\ttt$  is
\begin{equation}\label{tttt*}
\tau^*=2n(\tps + \tlm + \bt\ttt).
\end{equation}
By virtue of \eqref{tttSl} for a Sasaki-like accR manifold and \eqref{tttt*}, we obtain the expression of the scalar curvature with respect to $\g$ as follows
\begin{equation}\label{tttt2-Sl}
\ttt=-2n\frac{\tps + \tlm-1}{1 + 2n\bt}.
\end{equation}
The last formula is true for the case $\bt\neq-\frac{1}{2n}$.


Combining \eqref{tttt} and \eqref{tttt2-Sl}, we get the following form of the scalar curvature with respect to $g$
\begin{equation}\label{tau-Sl}
\tau=-\frac{1}{1+(2n+1)\bt}\left\{
(2n+1)(\psi+\lm)
+1+\frac{\tps+\tlm-1}{1+2n\bt}\right\},
\end{equation}
where $\bt\neq -\frac{1}{2n+1}$. Otherwise, for $\bt= -\frac{1}{2n+1}$ the identity in \eqref{rel-Sl} is valid for this value of $\bt$.


On the other hand, a consequence of \eqref{ro-psi} and the value of $\rho(\xi,\xi)$ from \eqref{curSl} for the Sasaki-like case gives the following relation
\begin{equation}\label{tttt-Sl}
\bt\tau+\bt\ttt+\psi + \lm+\tps + \tlm+2n=0.
\end{equation}

Let us check what follows in the particular case $\bt=0$.
Then, the last relation implies
\[ 
\psi + \lm+\tps + \tlm+2n=0,
\] 
which we apply in \eqref{tau-Sl} and \eqref{tttt2-Sl} to specialize them in the following form
\begin{equation}\label{tttt-bt=0-Sl}
\tau=-2n(\psi+\lm-1),\qquad \ttt=-2n(\tps + \tlm-1).
\end{equation}
    Therefore, the identity in \eqref{concl-tttt-Sl} holds, as do the formulas in \eqref{concl-tau-Sl}.
    Thus, we find that no different results are obtained for $\bt=0$ compared to the case $\bt\neq -\frac{1}{2n}$.

Let us return to \eqref{tttt-Sl} and the solution of the system of equations in \eqref{tttt-Sl} and \eqref{tttt} with respect to $\tau$ and $\ttt$ for $\bt\neq 0$ and $\bt\neq -\frac{1}{2n}$ gives us
\begin{equation}\label{bt-tt-Sl}
\tau=-2n\frac{\psi+\lm-1}{1+2n\bt},\qquad
\ttt=-\frac{1}{\bt}\left\{\frac{\psi+\lm-1}{1+2n\bt}+\tps+\tlm+2n+1\right\}.
\end{equation}
After that comparing the equalities for $\tau$ in \eqref{tau-Sl} and \eqref{bt-tt-Sl}, we obtain \eqref{rel-Sl}.
The same relation results from a similar comparison for $\ttt$ in \eqref{tttt2-Sl} and \eqref{bt-tt-Sl}.
The identity in \eqref{rel-Sl} is a generalization of the corresponding result in the $\bt=0$ case. In this way, we obtain the formulas in \eqref{concl-tau-Sl}.

Due to \eqref{rel-Sl} the sum of $\tau$ and $\ttt$ is a constant that depends only on the dimension of the manifold as in \eqref{concl-tttt-Sl}.

For the case $\bt=-\frac{1}{2n}$, the equalities in \eqref{concl-tttt-Sl} and \eqref{rel-Sl} 
imply the relations in \eqref{func-case}. Therefore, \eqref{tttt} and \eqref{tttt-Sl} take the form in \eqref{concl-tttt-Sl}.

In conclusion, using \eqref{ro-psi} and \eqref{concl-tau-Sl}, we obtain the Ricci tensor expression in \eqref{concl-go-Sl}, which shows that the manifold is Einstein-like since \eqref{defEl} is satisfied for $a=\frac{\tau}{2n}-1$, $b=\frac{\ttt}{2n}-1$ and $c=0$.
\end{proof}

\subsubsection{Example of a $\bt$-RB almost soliton with a conformal potential}

As in \cite{IvMaMa45}, let us consider
a Sasaki-like accR manifold of arbitrary dimension with an Einstein metric $\bar g$. The image of this manifold by a contact homothetic
transformation of the metric given by
$g = p \bar g + q \tilde{\bar{g}} + ( 1 - p - q )\eta\otimes\eta$ for $p,q\in\R$, $(p,q)\neq(0,0)$,
is also a Sasaki-like accR manifold. The corresponding Ricci tensor has the form
\[
\rho=\frac{2n}{p^2+q^2}\left\{p g - q \g + \left(p^2+q^2-p\right)\eta\otimes\eta\right\}.
\]

Now we calculate the scalar curvatures with respect to B-metrics $g$ and $\g$ as follows
\begin{equation}\label{Ex1-ttt}
\tau= 2n\left\{1+\frac{2np}{p^2+q^2}\right\},\qquad
\ttt=2n\left\{1-\frac{2nq}{p^2+q^2}\right\}.
\end{equation}

Let $\vt$ be a conformal vector field with respect to both B-metrics $g$ and $\g$ with functions $\psi$ and $\tilde\psi$, respectively.
We construct a $\bt$-RB-like almost soliton on the transformed manifold with potential $\vt$ and a pair of functions $(\lm,\tlm)$ satisfying \eqref{defRBl}.

By virtue of  \eqref{concl-tttt-Sl} and \eqref{Ex1-ttt}, we obtain the following condition $p^2+q^2-p+q=0$, which has a solution $p=\frac12(1+\sqrt{2}\cos{t})$, $q=-\frac12(1-\sqrt{2}\sin{t})$ for $t\in\R$. Then the scalar curvatures from \eqref{Ex1-ttt} specialize into the following form for $t\neq (8l+3)\frac{\pi}{4}$, $l\in\mathbb{Z}$
\begin{equation}\label{Ex1-ttt=}
\begin{array}{l}
\tau= 2n\dfrac{(n+1)\sqrt{2} +(2n+1) \cos{t}-\sin{t}}{\sqrt{2} +\cos{t}-\sin{t}},\\
\ttt=2n\dfrac{(n+1)\sqrt{2}+\cos{t} -(2n+1) \sin{t}}{\sqrt{2} +\cos{t}-\sin{t}}.
\end{array}
\end{equation}

Then we determine the functions $\psi$, $\tilde\psi$, $\lm$, $\tilde\lm$ in the case $\bt\neq-\frac{1}{2n}$ as follows
\begin{equation}\label{Ex1-psilm}
\begin{array}{l}
\psi+\lm = 1-(1+2n\bt)\dfrac{(n+1)\sqrt{2} +(2n+1) \cos{t}-\sin{t}}{\sqrt{2} +\cos{t}-\sin{t}},\\
\tilde\psi+\tilde\lm =1-(1+2n\bt)\dfrac{(n+1)\sqrt{2}+\cos{t} -(2n+1) \sin{t}}{\sqrt{2} +\cos{t}-\sin{t}}.
\end{array}
\end{equation}
In the case $\bt=-\frac{1}{2n}$, the equalities in \eqref{Ex1-psilm} obviously reduce to those in  \eqref{func-case}.

We directly verify that \eqref{Ex1-ttt=} and \eqref{Ex1-psilm} satisfy  the expressions in \eqref{concl-tau-Sl} and \eqref{rel-Sl}.

In conclusion, we found that the constructed manifold satisfies the conditions of \thmref{thm:conf}.


\subsection{The potential is vertical vector field}

Suppose that $\M$ is a Sasaki-like accR manifold admitting a $\bt$-RB-like almost soliton whose potential vector field $\vt$ is pointwise collinear with $\xi$, i.e.
$\vt = k\xi$, where $k$ is a differentiable function on $\MM$. It is clear that $k = \eta(\vt)$
and therefore $\vt$ belongs to the vertical distribution $H^{\bot} = \Span(\xi)$, which is
orthogonal to the contact distribution $H = \ker(\eta)$ with respect to both $g$ and $\g$.

According to \cite{Man63},  we have the expression  $\left(\LL_{\vt} g\right)(x,y)
=h(x,y)-2kg(x,\f y)$ for the vertical potential $\vt$, where the first equality of \eqref{curSl} is used and the symmetric tensor $h(x,y)=\D{k}(x)\eta(y)+\D{k}(y)\eta(x)$ is denoted for the sake of brevity. Then, by virtue of \eqref{gg} we get
\begin{equation}\label{LLg}
\LL_{\vt} g
=h-2k\left(\g-\eta\otimes\eta\right).
\end{equation}

Similarly, since for a Sasaki-like accR manifold $\tn_x\xi=-\f x$ is true \cite{Man78}, we have $\left(\LL_{\vt} \g\right)(x,y)
=h(x,y)-2k g(\f x,\f y)$ and considering the last equality of \eqref{strM}, the last expression  takes the following form
\begin{equation}\label{LLg2}
\LL_{\vt} \g
=h+2k \left(g-\eta\otimes\eta\right).
\end{equation}


\begin{theorem}\label{thm:vert}
Let $\M$ be a $(2n+1)$-dimensional Sasaki-like accR manifold that is a $\bt$-RB-like almost soliton with a pair of soliton functions $(\lm,\tlm)$ and vertical potential vector field $\vt$ with potential function $k$.
Then the manifold is Einstein-like and has the following Ricci tensor
\begin{equation}\label{rho-tttt-vert-Sl}
\rho=\left(\frac{\tau}{2n}-1\right)g
+\left(\frac{\ttt}{2n}-1\right)\g
-\left\{\frac{\tau+\ttt}{2n}-2(n+1)\right\}\eta\otimes\eta.
\end{equation}

In the case of $\bt\neq-\frac{1}{2n}$,
the scalar curvatures with respect to $g$ and $\g$ are determined by:
\begin{equation}\label{concl-vert-tau-Sl}
\tau=-\frac{2n}{1+2n\bt}(\lm+k-1),\qquad
\ttt=-\frac{2n}{1 + 2n\bt}(\tlm -k -1).
\end{equation}
Moreover, the following condition for the used functions is valid
\begin{equation}\label{func-vert-Sl}
\D{k}=\D{k}(\xi)\eta,\qquad
\D{k}(\xi) = -\frac{\lm+\tlm-2}{2(1+2n\bt)}-n-1.
\end{equation}

In the case of $\bt=-\frac{1}{2n}$,
the two scalar curvatures satisfy the following relation
\begin{equation}\label{tttt2-vert-Sl}
\tau+\ttt=4n\left\{\D{k}(\xi)+n+1\right\}
\end{equation}
and the soliton functions are expressed by $k$ as follows
\begin{equation}\label{lmtlmk-vert-Sl}
\lm=1-k,\qquad \tlm=1+k.
\end{equation}
\end{theorem}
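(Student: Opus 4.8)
The plan is to follow the scheme of the proof of \thmref{thm:conf}, but now starting from \eqref{defRBl} with the Lie derivatives \eqref{LLg} and \eqref{LLg2} for a vertical potential substituted in. Adding half of each and simplifying with \eqref{gg} gives $\tfrac12\LL_{\vt}g+\tfrac12\LL_{\vt}\g=h+k(g-\g)$, so \eqref{defRBl} becomes
\[
\rho=-(k+\lm+\bt\tau)\,g+(k-\tlm-\bt\ttt)\,\g-h,
\]
where $h(x,y)=\D{k}(x)\eta(y)+\D{k}(y)\eta(x)$; denote this identity by $(\dagger)$.

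The first --- and only genuinely delicate --- step is to contract $(\dagger)$ with $\xi$ in the last argument. Using $g(x,\xi)=\g(x,\xi)=\eta(x)$ (from \eqref{conseq} and \eqref{gg}), $h(x,\xi)=\D{k}(x)+\D{k}(\xi)\,\eta(x)$, and the Sasaki-like value $\rho(x,\xi)=2n\,\eta(x)$ from \eqref{curSl}, one sees that every term except $\D{k}(x)$ is proportional to $\eta(x)$. Hence $\D{k}=\D{k}(\xi)\,\eta$, which is the first relation in \eqref{func-vert-Sl}; equating the $\eta$-coefficients yields the scalar relation
\[
\bt(\tau+\ttt)+\lm+\tlm+2\,\D{k}(\xi)+2n=0
\]
(which also reproduces $\rho(\xi,\xi)=2n$). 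Since $\D{k}$ is now vertical, $h=2\,\D{k}(\xi)\,\eta\otimes\eta$, so $(\dagger)$ already exhibits $\rho$ in the Einstein-like form \eqref{defEl} with $a=-(k+\lm+\bt\tau)$, $b=k-\tlm-\bt\ttt$ and $c=-2\,\D{k}(\xi)$; in particular the manifold is Einstein-like.

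Next I would extract the two scalar curvatures using the consequences $\tau=(2n+1)a+b+c$ and $\tau^*=-2nb$ of \eqref{defEl}. From $\tau^*=-2nb$ together with the Sasaki-like identity \eqref{tttSl}, \ie $\ttt=-\tau^*+2n$, one obtains $(1+2n\bt)\,\ttt=2n(1+k-\tlm)$. Substituting the $\xi$-relation above into $\tau=(2n+1)a+b+c$, the $\bt\ttt$-terms cancel and the $\bt\tau$-terms combine into $(1+2n\bt)\,\tau$, leaving $(1+2n\bt)\,\tau=-2n(\lm+k-1)$. If $\bt\neq-\tfrac1{2n}$, dividing by $1+2n\bt$ gives \eqref{concl-vert-tau-Sl}; adding the two formulas produces $\tau+\ttt=\frac{2n(2-\lm-\tlm)}{1+2n\bt}$, and solving the $\xi$-relation for $\D{k}(\xi)$ gives, after simplification, the second equality in \eqref{func-vert-Sl}. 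Re-substituting \eqref{concl-vert-tau-Sl} back into $a,b,c$ turns them into $\frac{\tau}{2n}-1$, $\frac{\ttt}{2n}-1$ and $-\bigl\{\frac{\tau+\ttt}{2n}-2(n+1)\bigr\}$, which is precisely \eqref{rho-tttt-vert-Sl}.

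Finally, the case $\bt=-\tfrac1{2n}$ follows by specialization: the relations $(1+2n\bt)\,\tau=-2n(\lm+k-1)$ and $(1+2n\bt)\,\ttt=2n(1+k-\tlm)$ degenerate to $\lm=1-k$ and $\tlm=1+k$, which is \eqref{lmtlmk-vert-Sl}; inserting $\lm+\tlm=2$ and $\bt=-\tfrac1{2n}$ into the $\xi$-relation gives $\tau+\ttt=4n\{\D{k}(\xi)+n+1\}$, \ie \eqref{tttt2-vert-Sl}, and a direct check shows \eqref{rho-tttt-vert-Sl} persists. Unlike in \thmref{thm:conf}, no further subdivision (such as $\bt=0$ or $1+(2n+1)\bt=0$) is needed here, because those coefficients drop out in the step isolating $\tau$; so the main thing to watch is carrying out the first $\xi$-contraction correctly, since it is what forces $\grad k$ into the vertical distribution and makes the scalar-curvature system close.
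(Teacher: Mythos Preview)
Your argument is correct and follows essentially the same route as the paper: substitute \eqref{LLg} and \eqref{LLg2} into \eqref{defRBl} to obtain an expression for $\rho$, contract with $\xi$ using the Sasaki-like value $\rho(x,\xi)=2n\,\eta(x)$ to force $\D k=\D k(\xi)\eta$ and recover the scalar relation \eqref{rho-vert-Sl-2}, then take traces (you package these as the consequences $\tau=(2n+1)a+b+c$ and $\tau^*=-2nb$ of \eqref{defEl}, the paper traces \eqref{rho-vert-Sl-3} directly) together with \eqref{tttSl} to isolate $\tau$ and $\ttt$, and finally back-substitute for the Ricci tensor and handle the degenerate case $\bt=-\tfrac{1}{2n}$. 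The only cosmetic difference is the order in which you record the Einstein-like form versus the scalar-curvature formulas; the computations are identical.
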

\begin{proof}

Replacing \eqref{LLg} and  \eqref{LLg2} in \eqref{defRBl}, we obtain
\begin{equation}\label{rho-vert-Sl}
\rho = - (\lm+\bt\tau+k) g  - (\tlm+\bt\ttt-k) \g -h.
\end{equation}

For a Sasaki-like accR manifold we know the expression of $\rho(x,\xi)$ from \eqref{curSl}. We then compare it with the corresponding consequence of \eqref{rho-vert-Sl} and obtain $\D{k}=\D{k}(\xi)\eta$, where
\begin{equation}\label{rho-vert-Sl-2}
\D{k}(\xi) = -\frac12 \left\{\lm+\tlm+\bt(\tau+\ttt) +2n\right\}.
\end{equation}
Therefore, $k$ is a horizontal constant and $h$ takes the following form
\begin{equation}\label{h-vert-Sl}
h=-\left\{\lm+\tlm+\bt(\tau+\ttt) +2n\right\}\eta\otimes\eta.
\end{equation}
Then,
applying the last equality and \eqref{rho-vert-Sl-2} in \eqref{rho-vert-Sl},
for the Ricci tensor $\rho$ we get
\begin{equation}\label{rho-vert-Sl-3}
\rho = - (\lm+\bt\tau+k) g  - (\tlm+\bt\ttt-k) \g +\left\{\lm+\tlm+\bt(\tau+\ttt) +2n\right\}\eta\otimes\eta.
\end{equation}

Taking the appropriate traces in \eqref{rho-vert-Sl-3}, we obtain for $\bt\neq -\frac{1}{2n}$ the first equality in \eqref{concl-vert-tau-Sl} as well as
\begin{equation}\label{tau-ttt-vert-Sl}
\tau^*=2n(\tlm+\bt\ttt-k).
\end{equation}
Then, bearing in mind \eqref{tttSl}, the equality in \eqref{tau-ttt-vert-Sl} implies for $\bt\neq -\frac{1}{2n}$
the second equality in \eqref{concl-vert-tau-Sl}.
Substituting the expressions of $\tau$ and $\ttt$ from \eqref{concl-vert-tau-Sl}   into \eqref{rho-vert-Sl-3}, we get \eqref{rho-tttt-vert-Sl}.

As a consequence of \eqref{concl-vert-tau-Sl} and \eqref{rho-vert-Sl-2}, we express $\tau+\ttt$ in two ways. One is given in \eqref{tttt2-vert-Sl} and the other is as follows
\begin{equation}\label{tttt-dk-vert-Sl=}
\tau+\ttt=-\frac{2n}{1+2n\bt}\left(\lm+\tlm-2\right).
\end{equation}
Comparing two expressions implies the relation in \eqref{func-vert-Sl} between the used functions and the constant $\bt$.

Comparing \eqref{rho-vert-Sl-3} with \eqref{defEl}, it follows that $\M$ is Einstein-like and the coefficients in \eqref{defEl} are the following
\begin{equation}\label{abc-vert-Sl}
a=\frac{\tau}{2n}-1,\qquad b=\frac{\ttt}{2n}-1,\qquad c=-\frac{\tau+\ttt}{2n}+2(n+1).
\end{equation}

In the particular case $\bt= -\frac{1}{2n}$,  the dependencies in \eqref{lmtlmk-vert-Sl}  hold due to \eqref{concl-vert-tau-Sl}. Then, \eqref{rho-vert-Sl-3} is specialized as in \eqref{rho-tttt-vert-Sl}.
In addition, according to \eqref{rho-vert-Sl-2}, the equality in \eqref{tttt2-vert-Sl} also holds.
Note that in this case the scalar curvatures with respect to each of the B-metrics cannot be expressed separately.
\end{proof}

\subsubsection{Example of a $\bt$-RB almost soliton with a vertical potential}

Let us consider an explicit example given as Example 2 in \cite{IvMaMa45}. It concerns a Sasaki-like accR manifold derived on a Lie group $G$ of
dimension $5$, i.e. for $n=2$,
 with a basis of left-invariant vector fields $\{e_0,\dots, e_{4}\}$. The corresponding Lie algebra is
 defined by the commutators
\begin{equation}\label{comEx1}
\begin{array}{lll}
[e_0,e_1] = p e_2 + e_3 + q e_4,\qquad &[e_0,e_2] = - p e_1 -
q e_3 + e_4,\qquad &\\[0pt]
[e_0,e_3] = - e_1  - q e_2 + p e_4,\qquad &[e_0,e_4] = q e_1
- e_2 - p e_3,\qquad & p,q\in\R.
\end{array}
\end{equation}
The introduced accR structure is defined as follows
\begin{equation}\label{strEx1}
\begin{array}{l}
g(e_0,e_0)=g(e_1,e_1)=g(e_2,e_2)=-g(e_{3},e_{3})=-g(e_{4},e_{4})=1,
\\[0pt]
g(e_i,e_j)=0,\quad
i,j\in\{0,1,\dots,4\},\; i\neq j,
\\[0pt]
\xi=e_0, \quad \f  e_1=e_{3},\quad  \f e_2=e_{4},\quad \f  e_3=-e_{1},\quad \f  e_4=-e_{2}.
\end{array}
\end{equation}

Then, in \cite{Man62}, the components of the curvature tensor $R_{ijkl}=R(e_i,e_j,e_k,e_l)$
and those of the Ricci tensor  $\rho_{ij}=\rho(e_i,e_j)$ are calculated.
The non-zero of them are determined by the following equalities  and the property $R_{ijkl}=-R_{jikl}=-R_{ijlk}$:
\begin{equation}\label{Rex1}
\begin{array}{l}
R_{0110}=R_{0220}=-R_{0330}=-R_{0440}=1,\\
R_{1234}=R_{1432}=R_{2341}=R_{3412}=1,\\
R_{1331}=R_{2442}=1,\qquad
\rho_{00}=4.
\end{array}
\end{equation}
Therefore, its Ricci tensor has the form $\rho=4\eta\otimes \eta$ and the manifold is $\eta$-Einstein. Hence, the scalar curvature of $g$ is $\tau=4$ and the constructed manifold is  $*$-scalar flat, i.e. $\tau^*=0$ \cite{Man73}.
Then, due to  \eqref{tttSl}, we obtain for the scalar curvature of $\g$ the value $\ttt=4$.

Using \eqref{defRBl}, let us construct a $\bt$-RB-like almost soliton on $(G,\f,\xi,\eta,g)$ with vertical potential $\vt = k\xi$ for a constant $\bt$ and a pair of functions $(\lm,\tlm)$.

From \eqref{concl-vert-tau-Sl}, we determine the following conditions in the case $\bt\neq-\frac{1}{2n}$
\begin{equation}\label{Ex2-lmkbt}
\lm +k=-4\bt,\qquad \tlm -k=-4\bt.
\end{equation}
Then, \eqref{func-vert-Sl} implies $\D{k}(\xi)=-2$ and $\D{k}=-2\eta$. A solution of the last equation is e.g. $k=-2t$ assuming $\eta=\D{t}$. This form of $k$ also satisfies the condition in \eqref{tttt2-vert-Sl} for the case $\bt=-\frac{1}{2n}$.
This allows us to determine functions $(\lm,\tlm)$  from \eqref{Ex2-lmkbt} and \eqref{lmtlmk-vert-Sl} for all values of $\bt$ by
\begin{equation}\label{Ex2-lmbt}
\lm =2\left(t-2\bt\right),\qquad \tlm =-2\left(t+2\bt\right).
\end{equation}
Hence and from \eqref{h-vert-Sl} we obtain $h=-4\eta\otimes\eta$. As a consequence, \eqref{LLg} and \eqref{LLg2} take the following form
\[
\LL_{\vt} g
=4t\g-4(t+1)\eta\otimes\eta,\qquad
\LL_{\vt} \g
=-4t g+4(t+1)\eta\otimes\eta.
\]
Finally, we found that all the findings in \thmref{thm:vert} are satisfied for the given example.


\vspace{6pt}




\section*{Acknowledgments}{The research is partially supported by project FP23-FMI-002
of the Scientific Research Fund, University of Plovdiv Paisii Hilendarski.}

\end{document}